\newtheorem{thm}{Theorem}
\newtheorem{lem}[thm]{Lemma}
\newtheorem{conj}[thm]{Conjecture}
\theoremstyle{definition}
\newtheorem{rem}[thm]{Remark}
\newcommand{\PL}[1]{\ensuremath{\mathrm{PL}_{\boldsymbol{#1}}}}
\newcommand{\bu}{\ensuremath{\boldsymbol{u}}}
\newcommand{\N}{\mathbb{N}}
\newcommand{\R}{\mathbb{R}}
\setlist[enumerate]{label=\roman*),itemsep=0pt}
\begin{document}







\title{A note on palindromic length of Sturmian sequences}
\author{Petr Ambro\v{z}, Edita Pelantov\'a\\[1mm]
Department of Mathematics FNSPE\\ Czech Technical University in Prague\\
Trojanova 13, 120 00 Praha 2, Czech Republic}
\date{}

\maketitle

\begin{abstract}
Frid, Puzynina and Zamboni (2013) defined the palindromic length of a finite word $w$ as the minimal number 
of palindromes whose concatenation is equal to $w$. For an infinite word $\bu$ we study $\PL{u}$, that is, 
the function that assigns to each positive integer $n$, the maximal palindromic length of factors of length 
$n$ in $\bu$. Recently, Frid (2018) proved that $\limsup_{n\to\infty}\PL{u}(n)=+\infty$ for any Sturmian word $\bu$.
We show that there is a constant $K>0$ such that $\PL{u}(n)\leq K\ln n$ for every Sturmian word $\bu$, and
that for each non-decreasing function $f$ with property $\lim_{n\to\infty}f(n)=+\infty$ there is a Sturmian
word $\bu$ such that $\PL{u}(n)=\mathcal{O}(f(n))$.
\end{abstract}



\section{Introduction}

Palindromic length of a word $v$, denoted by $|v|_{\text{pal}}$, is the minimal number $K$ of
palindromes $p_1,p_2,\ldots,p_K$ such that $v=p_1p_2\cdots p_K$. This notion has been introduced 
by Frid, Puzynina and Zamboni~\cite{frid-puzynina-zamboni-aam-50} along with the following conjecture.

\begin{conj}
If there is a positive integer $P$ such that $|v|_{\text{pal}}\leq P$ for every factor $v$ of an infinite
word $\boldsymbol{w}$ then $\boldsymbol{w}$ is eventually periodic.
\end{conj}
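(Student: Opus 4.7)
The plan is to argue by contraposition: assume $\boldsymbol{w}$ is not eventually periodic and derive that the palindromic lengths of its factors must be unbounded. A first reduction is to pass from $\boldsymbol{w}$ to a point $\boldsymbol{y}$ of a minimal subshift of its orbit closure. Every factor of such a $\boldsymbol{y}$ is also a factor of $\boldsymbol{w}$, so the hypothetical bound $P$ is inherited; and aperiodicity of $\boldsymbol{w}$ forces aperiodicity of $\boldsymbol{y}$, since if the minimal subshift were a finite periodic orbit then every sufficiently long factor of $\boldsymbol{w}$ would be a factor of that period, contradicting aperiodicity. Thus we may assume $\boldsymbol{y}$ is uniformly recurrent and aperiodic while having palindromic length of all factors bounded by $P$.

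For such a $\boldsymbol{y}$, any factor $v$ of length $n$ has a decomposition $v=p_1\cdots p_k$ with $k\leq P$, so by pigeonhole some $p_i$ has length at least $n/P$. Hence $\boldsymbol{y}$ contains palindromic factors of arbitrarily large length. I would fix such a long palindromic factor $p$ and consider two consecutive occurrences of $p$ in $\boldsymbol{y}$; uniform recurrence bounds the distance between them, and the window spanned between them itself has palindromic length at most $P$. Combining the $P$-bounded palindromic decomposition of that window with the constraint that $p$ appears both as a prefix and, via the second occurrence, overlappingly, a Fine--Wilf type argument in the palindromic setting should force the gap to be compatible with a fixed period of $p$. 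Iterating along a nested chain of palindromic factors of growing length is intended to produce a single global period of $\boldsymbol{y}$; eventual periodicity of the original $\boldsymbol{w}$ then follows from periodicity of $\boldsymbol{y}$ together with the inclusion of factor languages.

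The main obstacle, and the reason the conjecture is still open in full generality, lies in the step that converts the local palindromic-factorization constraints into a single global period. Bounded palindromic length permits only finitely many decomposition ``shapes'' (choices of number, lengths, and first halves of the constituent palindromes) for factors of a given length; what one needs is that these shapes synchronise along the word, rather than merely limiting the factor complexity subexponentially or slowing the growth of $\PL{y}$. This synchronisation is what the extra combinatorial structure of Sturmian words (as the present paper achieves via the $\mathcal{O}(\log n)$ upper bound, which combined with Frid's unboundedness result rules out any uniform bound $P$), of episturmian words, or of fixed points of primitive morphisms makes accessible; the general case apparently requires a genuinely new ingredient.
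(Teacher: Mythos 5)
The statement you are asked about is Conjecture~1 of the paper, due to Frid, Puzynina and Zamboni, and the paper does \emph{not} prove it: it only records that the conjecture is known for $r$-power-free words, hence (via Mignosi's result) for Sturmian words with bounded partial quotients, and, by Frid's 2018 theorem, for all Sturmian words. So there is no ``paper proof'' to compare against, and your text is, as you yourself say in the final paragraph, a strategy sketch rather than a proof. The decisive gap is exactly the one you name: the ``Fine--Wilf type argument in the palindromic setting'' that would convert the $P$-bounded factorizations of the windows between consecutive occurrences of a long palindrome into a single global period is not supplied, and no such argument is currently known. Pointing at the place where the difficulty sits is not the same as closing it, so the proposal cannot be accepted as a proof of the conjecture.

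There is also a concrete error in your preliminary reduction. You pass to a minimal subshift $Y$ in the orbit closure of $\boldsymbol{w}$ and claim that aperiodicity of $\boldsymbol{w}$ forces aperiodicity of $Y$, ``since if the minimal subshift were a finite periodic orbit then every sufficiently long factor of $\boldsymbol{w}$ would be a factor of that period.'' That implication is false: if $Y$ is the orbit of $q^\infty$, one only learns that $q^n$ is a factor of $\boldsymbol{w}$ for every $n$, not that all long factors of $\boldsymbol{w}$ are factors of $q^\infty$. An aperiodic word such as $0^{n_1}10^{n_2}10^{n_3}1\cdots$ with rapidly growing blocks has $\{000\cdots\}$ as the unique minimal subshift in its orbit closure. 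So even granting a proof in the uniformly recurrent case, the non-recurrent case would need a separate argument. The correct takeaway is the one the paper itself draws: bounded palindromic length combined with the extra structure of Sturmian words is enough (indeed, Theorem~\ref{thm:rust_pomalejsi_nez_ln} together with Frid's $\limsup$ result gives the Sturmian case), but the general conjecture remains open.
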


Frid et al.\ proved validity of the conjecture for $r$-power-free infinite words, i.e., for words which
do not contain factors of the form $v^r=vv\cdots v$ ($r$ times for some integer $r\geq 2$). By result
of Mignosi~\cite{mignosi-tcs-82} the conjecture thus holds for any Sturmian word whose slope has bounded 
coefficients in its continued fraction. Recently, Frid~\cite{frid-ejc-71} proved the conjecture for all Sturmian 
words.

In this paper we study asymptotic growth of function $\PL{u}:\mathbb{N}\rightarrow\mathbb{N}$ defined for an infinite
word $\bu$ by
\[
\PL{u}(n) = \max\{|v|_{\text{pal}} : \text{$v$ is factor of length $n$ in $\boldsymbol{u}$}\}. 
\]
The aforementioned result by Frid can be stated, using function $\PL{u}$, in the form of the following theorem.

\begin{thm}[\cite{frid-ejc-71}]
  Let $\boldsymbol{u}$ be a Sturmian word. Then $\limsup\limits_{n\to\infty}\PL{u}(n)=+\infty$.
\end{thm}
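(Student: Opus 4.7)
The plan is to argue by contradiction: suppose $\PL{u}(n)\leq P$ for every $n$, and derive that $\bu$ is eventually periodic, contradicting the aperiodicity of Sturmian words. Since Frid, Puzynina and Zamboni already proved this implication for $r$-power-free words, and by Mignosi's theorem every Sturmian word of slope with bounded partial quotients is $r$-power-free for some $r$, the only case requiring new ideas is that of a slope $\alpha=[0;a_1,a_2,\ldots]$ with unbounded partial quotients. All the work should be concentrated there.

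To exploit the structure of $\bu$, I would pass to its S-adic representation through standard words. Let $s_n$ denote the $n$-th standard word of slope $\alpha$, defined by $s_{-1}=b$, $s_0=a$, $s_n=s_{n-1}^{a_n}s_{n-2}$. A classical fact states that each $s_n$ equals $p_n\cdot xy$, where $p_n$ is a palindrome and $xy\in\{ab,ba\}$; moreover, the $p_n$ are (up to a harmless shift) the bispecial factors of $\bu$, and their lengths tend to infinity. Since every $p_n$ is a factor of $\bu$, it suffices to show $|p_n|_{\text{pal}}\to\infty$, which immediately yields $\limsup_{n\to\infty}\PL{u}(n)=+\infty$.

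The core technical step is a quantitative lower bound of the form $|p_n|_{\text{pal}}\geq f(n)$ with $f(n)\to\infty$. I would attempt to prove by induction that any optimal palindromic factorization $p_n=q_1q_2\cdots q_k$ must essentially respect the recursive decomposition $s_n=s_{n-1}^{a_n}s_{n-2}$: long palindromic pieces $q_i$ can only occur in positions dictated by the global symmetry of $p_n$, while short pieces consume only a bounded portion of the word. This should yield an inductive inequality relating $|p_n|_{\text{pal}}$ to $|p_{n-2}|_{\text{pal}}$ (or to the palindromic length of a shorter canonical factor), from which growth to infinity follows.

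The principal obstacle is the synchronization issue: the palindromes in an optimal factorization of $p_n$ need not align with the blocks $s_{n-1}$, $s_{n-2}$, and short palindromes can straddle block transitions. A suitable synchronization lemma — asserting that palindromic factors of $p_n$ either stay inside a single block or are forced by the palindromicity of $p_n$ itself — is the combinatorial heart of the argument. Here the characteristic properties of Sturmian words (exactly $n+1$ factors of length $n$, a unique right special factor, and palindromic bispecial factors) should enter decisively, since these are precisely the features that distinguish Sturmian words from generic aperiodic binary sequences for which no such result can hold uniformly.
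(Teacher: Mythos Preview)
The paper does not supply its own proof of this statement; it is quoted as a theorem of Frid~\cite{frid-ejc-71}, and all the machinery built in Sections~3--4 (Lemmas~\ref{lem:len_of_image}--\ref{lem:preimage_and_its_pallen} and the estimates~\eqref{eq:pallen_from_iterations}) is aimed at \emph{upper} bounds for $\PL{u}$, not lower bounds. So there is no in-paper argument to compare your proposal against.

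That aside, your plan has a fatal defect. You introduce the central palindromes $p_n$ of the standard words $s_n=p_n\cdot xy$ and then announce that ``it suffices to show $|p_n|_{\text{pal}}\to\infty$''. But by your own description $p_n$ is a palindrome, so $|p_n|_{\text{pal}}=1$ for every $n$; the ``optimal palindromic factorization $p_n=q_1q_2\cdots q_k$'' you propose to dissect is simply $k=1$, $q_1=p_n$, and no synchronization lemma or inductive inequality can produce a nontrivial lower bound on a quantity that is identically~$1$. To obtain growing palindromic length one must pick factors that are \emph{far from palindromic} --- this is precisely what Frid does in~\cite{frid-ejc-71}, selecting specific prefixes via the Ostrowski numeration system (see Remark~\ref{rem:frid_lower_bound_conjecture} for the Fibonacci instance). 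The bispecial palindromes are the worst possible witnesses for this purpose; if you want to salvage the S-adic viewpoint you must replace them by a sequence of non-palindromic factors whose structure still interacts with the recursion $s_n=s_{n-1}^{a_n}s_{n-2}$.
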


We prove the following two theorems about the rate of growth of function $\PL{u}$
for Sturmian words. 

\begin{thm}\label{thm:libovolne_pomaly_rust}
Let $f:\mathbb{N}\rightarrow\mathbb{R}$ be a non-decreasing function with $\lim\limits_{n\to\infty}f(n)=+\infty$.
Then there is a Sturmian word $\bu$ such that $\PL{u}(n)=o(f(n))$.
\end{thm}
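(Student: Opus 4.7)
The plan is to build $\bu$ as a Sturmian word whose slope $\alpha = [0;a_1,a_2,\ldots]$ has partial quotients chosen inductively so that the convergent denominators $q_k$ grow extremely fast in comparison with $f$. The strategy rests on a structural upper bound on palindromic length that holds for every Sturmian word, combined with the freedom to make $q_k$ as large as desired by enlarging $a_k$.

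The first step is a structural lemma: there exists an absolute constant $c$ such that for every Sturmian word $\bu$ and every factor $w$ with $|w| \leq q_k$, one has $|w|_{\mathrm{pal}} \leq ck$. The idea is that every such factor admits an S-adic-style decomposition into a bounded number of occurrences of the standard words $s_0, s_1, \ldots, s_{k-1}$, each of which is itself the concatenation of two palindromes. Exploiting the long palindromic factors of $\bu$ (arbitrarily long ones exist, built from palindromic prefixes of the standard words) and reassembling the decomposition symmetrically around them, one obtains a palindromic factorisation whose length grows only linearly in $k$, by induction on $k$ and independently of the partial quotients $a_i$. This is consistent with the uniform logarithmic bound stated as Theorem 2 in the introduction, since $q_k$ grows at least like the Fibonacci numbers and hence $\ln q_k = \Omega(k)$.

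The second step is the inductive construction of $\alpha$. Having chosen $a_1,\ldots,a_{k-1}$, the value $q_{k-1}$ is fixed; since $f$ is non-decreasing with $f(n) \to \infty$, choose $a_k$ so large that $f(q_k) \geq k^2$, which is possible because $q_k \geq a_k q_{k-1}$ can be made arbitrarily large. Let $\bu$ be the Sturmian word with slope $\alpha = [0;a_1,a_2,\ldots]$. For any $n \geq q_1$, let $k(n)$ be the unique index with $q_{k(n)} \leq n < q_{k(n)+1}$; then $k(n) \to \infty$ as $n\to\infty$. By the lemma, $\PL{u}(n) \leq c(k(n)+1)$, and by monotonicity of $f$, $f(n) \geq f(q_{k(n)}) \geq k(n)^2$. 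Hence
\[
\frac{\PL{u}(n)}{f(n)} \;\leq\; \frac{c\bigl(k(n)+1\bigr)}{k(n)^2} \;\longrightarrow\; 0 \quad\text{as } n\to\infty,
\]
which yields $\PL{u}(n) = o(f(n))$, as required.

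The main obstacle is the structural lemma: proving the uniform linear bound $|w|_{\mathrm{pal}} \leq ck$ for $|w| \leq q_k$ with a constant $c$ that does not depend on the slope's partial quotients. This requires a careful combinatorial analysis of Sturmian factors through the standard-word hierarchy and palindromic closures. Once this is in hand, the remainder of the proof is a direct diagonalisation, adjusting each $a_k$ in turn to outpace $f$.
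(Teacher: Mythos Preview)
Your overall strategy coincides with the paper's: bound the palindromic length of a Sturmian factor linearly in the continued-fraction depth, then choose the partial quotients $a_k$ so that the relevant length scale (you use $q_k$, the paper uses $a_1\cdots a_k$) outruns $f$ by a factor of $k^2$. The diagonalisation in your second step is exactly what the paper does.

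The gap is your first step. You state the structural lemma ($|w|\le q_k \Rightarrow |w|_{\mathrm{pal}}\le ck$, with $c$ independent of the slope) but do not prove it; the sketch via standard-word decompositions and palindromic closures is not developed enough to stand on its own, and you explicitly flag it as the main obstacle. The paper fills precisely this gap, but by a cleaner and more direct mechanism than the one you outline: it desubstitutes $v$ via the Sturmian morphisms $\psi_{a_i}\colon 0\mapsto 10^{a_i-1},\ 1\mapsto 10^{a_i}$, producing a chain $v=v^{(1)},v^{(2)},\ldots,v^{(j+1)}$ where $v^{(i+1)}$ is a factor of the Sturmian word with slope $[0;a_{i+1},a_{i+2},\ldots]$. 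Two short lemmas give $|v^{(i)}|_{\mathrm{pal}}\le |v^{(i+1)}|_{\mathrm{pal}}+4$ and $|v^{(i)}|\ge a_i|v^{(i+1)}|$; iterating until $v^{(j+1)}$ contains at most one letter $1$ (so $|v^{(j+1)}|_{\mathrm{pal}}\le 2$) yields $|v|_{\mathrm{pal}}\le 4j+2$ and $|v|\ge a_1\cdots a_j$. This is your structural lemma with the explicit constant $c=4$, obtained through the morphic S-adic structure rather than the standard-word hierarchy you propose.
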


\begin{thm}\label{thm:rust_pomalejsi_nez_ln}
There is a constant $K$ such that for every Sturmian word $\bu$ we have $\PL{u}\leq K\ln n$.
\end{thm}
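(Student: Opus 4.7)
The plan is to leverage the $S$-adic representation of Sturmian words via palindromic morphisms, together with the logarithmic growth of continued-fraction denominators. Let $\bu$ be a Sturmian word with slope $\alpha = [0; a_1, a_2, \ldots]$ and let $q_k$ denote the denominators of the convergents of $\alpha$. Since $q_k \geq F_k$ (Fibonacci numbers), the level $k$ satisfying $q_k \leq n < q_{k+1}$ is $O(\log n)$, uniformly over all Sturmian slopes. My target is to prove, by induction on $k$, that every factor of any Sturmian word of length at most $q_k$ has palindromic length at most $Mk$ for a universal constant $M$; this immediately yields the theorem.

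The inductive step would rest on the classical fact that the characteristic Sturmian $c_\alpha$ (which shares the factor set of $\bu$) can be obtained by iterated palindromic closure, so there exist letter-to-palindrome morphisms $\psi_1, \psi_2, \ldots$ (with $\psi_i$ dictated by the partial quotient $a_i$) such that $c_\alpha = \lim_n \psi_1 \circ \cdots \circ \psi_n(x)$ and the derived Sturmian word $\bu^{(k)}$ of shifted slope $[0; a_{k+1}, a_{k+2}, \ldots]$ satisfies $\bu = \Psi_k(\bu^{(k)})$ up to an initial segment, where $\Psi_k := \psi_1 \circ \cdots \circ \psi_k$ and $|\Psi_k(\mathrm{letter})|$ is of order $q_k$. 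The trivial but crucial observation is that applying a letter-to-palindrome morphism to a palindrome yields a palindrome, so $|\Psi_k(w)|_{\mathrm{pal}} \leq |w|_{\mathrm{pal}}$ for every word $w$ (a palindromic factorization of $w$ lifts to a palindromic factorization of its image). Given a factor $v$ of $\bu$ of length $n \leq q_k$, I would decompose $v = \alpha \cdot \Psi_k(v') \cdot \beta$, where $v'$ is a factor of $\bu^{(k)}$ of bounded length (so $|v'|_{\mathrm{pal}} = O(1)$) and $\alpha, \beta$ are boundary pieces of length less than $q_k$. The middle term then contributes only $O(1)$ palindromes, leaving only $\alpha$ and $\beta$ to control.

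The principal obstacle is closing the recursion without doubling the constant at each level: applying the inductive hypothesis naively to both $\alpha$ and $\beta$ yields $|v|_{\mathrm{pal}} \leq 2M(k-1) + O(1)$, which exceeds $Mk$. I would resolve this by choosing the cut carefully so that at most one of $\alpha, \beta$ requires a nontrivial inductive appeal, while the other is automatically a palindrome (or is empty). The key idea is to align the cut at an occurrence of the central palindrome $C_{k-1}$ of length $q_{k-1} - 2$, which is a factor of every sufficiently long factor of $\bu$; combined with the rich-word property of Sturmian words (complete return words of palindromic bispecial factors are themselves palindromes), a well-chosen cut point produces one side that is a complete return word of $C_{k-1}$ --- hence a palindrome --- while the other side has length strictly less than $q_{k-1}$, to which the inductive hypothesis applies. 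This yields a recurrence of the form $f(q_k) \leq f(q_{k-1}) + O(1)$, solving to $f(n) \leq M \ln n$.
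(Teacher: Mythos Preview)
Your overall strategy --- desubstitute along the $S$-adic directive sequence and control the palindromic length level by level, using that the number of levels needed to cover a factor of length $n$ is $O(\ln n)$ --- is exactly the paper's. But two concrete pieces of your sketch do not work as stated, and the paper's fixes for them are simpler than what you propose.

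\textbf{(1) Letter-to-palindrome morphisms.} The directive morphisms of a Sturmian word are \emph{not} letter-to-palindrome in general: none of $G$, $\tilde G$, nor the paper's $\psi_b\colon 0\mapsto 10^{b-1},\ 1\mapsto 10^b$ send both letters to palindromes, and iterated palindromic closure does not produce such morphisms either. So the inequality $|\Psi_k(w)|_{\text{pal}}\le|w|_{\text{pal}}$ is not available. What the paper uses instead is the one-step observation that for every palindrome $p$, both $\psi_b(p)\,1$ and $1^{-1}\psi_b(p)$ are palindromes; this yields $|\psi_b(v)|_{\text{pal}}\le|v|_{\text{pal}}+1$ (Lemma~\ref{lem:palllen_of_image}), a weaker but sufficient substitute.

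\textbf{(2) The doubling obstacle.} Your resolution via richness and complete return words is vague, and I do not see how to guarantee that one side of a cut is a single complete return of $C_{k-1}$: a factor of length $\le q_k$ typically contains many occurrences of $C_{k-1}$, and neither side of a cut will in general be a single return. The paper avoids the obstacle altogether by peeling \emph{one} layer at a time rather than jumping straight to level $k$. Writing $v^{(i)}=v_L\,\psi_{a_i}(v^{(i+1)})\,v_R$, the boundary pieces $v_L,v_R$ are a proper suffix and a proper prefix of a single image $\psi_{a_i}(x)\in\{10^{a_i-1},10^{a_i}\}$, hence of the form $0^j$ or $10^j$; their palindromic lengths are at most $1$ and $2$ respectively. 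No inductive call on the boundaries is needed, so no doubling occurs: one gets $|v^{(i)}|_{\text{pal}}\le |v^{(i+1)}|_{\text{pal}}+4$ outright, whence $|v|_{\text{pal}}\le 4j+2$ after $j$ steps.

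Finally, for the length side you should not rely on $q_k\ge F_k$ via the product $a_1\cdots a_j$ (which may equal $1$ for many steps); the paper instead uses the two-step estimate $|(\psi_c\circ\psi_b)(w)|\ge 2|w|$ (Lemma~\ref{lem:len_of_image}(ii)), giving $|v|\ge 2^{\lfloor j/2\rfloor}$ and hence $|v|_{\text{pal}}/\ln|v|\le (4j+2)/\bigl(\tfrac{j-1}{2}\ln 2\bigr)\to 8/\ln 2$.
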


In other words, $\PL{u}$ may grow into infinity arbitrarily slow (Theorem~\ref{thm:libovolne_pomaly_rust}) 
and not faster than $\mathcal{O}(\ln n)$ (Theorem~\ref{thm:rust_pomalejsi_nez_ln}). Let us stress that
the constant $K$ in Theorem~\ref{thm:rust_pomalejsi_nez_ln} is universal for every Sturmian word.

Both theorems refer to upper
estimates on the growth of $\PL{u}$. Indeed, it is much more difficult to obtain a lower bound on the
growth, such bound is not known even for the Fibonacci word. Recently, Frid~\cite{frid-numeration-2018} considered 
a certain sequence of prefixes of the Fibonacci word, denoted $(w^{(n)})$, and she formulated 
a conjecture about the precise value of $|w^{(n)}|_{\text{pal}}$. This conjecture can be rephrased in the following
way (cf.~Remark~\ref{rem:frid_lower_bound_conjecture}).

\begin{conj}\label{conj:frid_lower_bound_conjecture}
Let $\boldsymbol{f}$ be the Fibonacci word, that is, the fixed point of the morphism $0\mapsto 01$, $1\mapsto 0$. 
Then
\[
\limsup_{n\to\infty}\frac{\PL{f}(n)}{\ln n} \geq \frac{1}{3\ln\tau},
\] 
where $\tau$ is the golden ratio.
\end{conj}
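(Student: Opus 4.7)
The plan is to identify the precise sequence of prefixes $(w^{(n)})$ considered in~\cite{frid-numeration-2018} and to establish two quantitative estimates: the length bound $|w^{(n)}| = \Theta(\tau^{3n})$ and the palindromic length lower bound $|w^{(n)}|_{\text{pal}} \ge n - O(1)$. Substituting into the definition of $\PL{f}$ and passing to the limsup gives
\[
\limsup_{n\to\infty}\frac{\PL{f}(|w^{(n)}|)}{\ln|w^{(n)}|}\ \ge\ \lim_{n\to\infty}\frac{n - O(1)}{(3n + O(1))\ln\tau}\ =\ \frac{1}{3\ln\tau},
\]
which is the claimed inequality. Note that Theorem~\ref{thm:rust_pomalejsi_nez_ln} already provides an upper bound of the same order $O(\ln n)$; only the sharp constant is at stake here.

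The length estimate is expected to be the easy half. The factor $3$ in the denominator strongly suggests that $w^{(n)}$ is built from three iterations of the Fibonacci substitution $\sigma : 0\mapsto 01,\ 1\mapsto 0$; a natural candidate is $w^{(n)} = \sigma^{3n}(v)$ for some fixed short word $v$, which immediately yields $|w^{(n)}| = \Theta(\tau^{3n})$ via the Perron eigenvalue of the incidence matrix of $\sigma$. The crux is then the lower bound on the palindromic length, for which the natural strategy is a descent argument on $n$: given a palindromic factorization $w^{(n+1)} = p_1 \cdots p_K$, desubstitute it under $\sigma^{-3}$ to produce a palindromic factorization of $w^{(n)}$ with at most $K - 1$ factors. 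Iterating from $n$ down to $0$ then forces $K \ge n + O(1)$.

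The main obstacle lies precisely in this desubstitution step. A palindromic factor of $\sigma^3(w)$ need not be the $\sigma^3$-image of a palindromic factor of $w$, because its endpoints can fall inside a $\sigma^3$-block rather than between two blocks. One therefore has to classify the possible boundary types of a palindrome sitting in $\boldsymbol{f}$ relative to the $\sigma^3$-decomposition, and argue that after transferring only a bounded number of letters between $p_{i-1}$, $p_i$, $p_{i+1}$ the modified factorization respects the block boundaries, losing at most one palindrome in the process. The exponent $3$ is presumably chosen because it is the smallest power of $\sigma$ for which only finitely many boundary types occur. A further delicate point is to preclude a single $p_i$ from absorbing many $\sigma^3$-blocks, which would break the descent; ruling this out likely requires sharp information about the maximal lengths of palindromic factors of $\boldsymbol{f}$, a classical but technically involved part of Sturmian combinatorics. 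Achieving the precise constant $1/(3\ln\tau)$, rather than some strictly smaller number, is the delicate quantitative feature that turns the heuristic above into an actual proof.
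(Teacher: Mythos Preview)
The statement you are trying to prove is labeled a \emph{conjecture} in the paper, and the paper does not prove it. Remark~\ref{rem:frid_lower_bound_conjecture} explains that it is a reformulation of Frid's unproved conjecture that $|w^{(n)}|_{\text{pal}}=2n+1$ for a specific sequence of prefixes of $\boldsymbol{f}$; only the upper bound $|w^{(n)}|_{\text{pal}}\le 2n+1$ is known. So there is no ``paper's own proof'' to compare against.

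Your proposal is not a proof either, and you acknowledge as much: the desubstitution step is described as ``the main obstacle,'' the choice of exponent $3$ is justified by ``presumably,'' and the control on long palindromic factors ``likely requires'' further work. These are precisely the missing pieces; the hard content of the conjecture is exactly the lower bound $|w^{(n)}|_{\text{pal}}\ge 2n+1$ (in the paper's normalization), and nothing in your sketch establishes it.

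There is also a concrete mismatch with the paper's description of the sequence $(w^{(n)})$. You guess $w^{(n)}=\sigma^{3n}(v)$ with $|w^{(n)}|=\Theta(\tau^{3n})$ and aim for $|w^{(n)}|_{\text{pal}}\ge n-O(1)$. In fact the paper specifies $|w^{(n)}|$ via the Ostrowski representation $(100)^{2n-1}101$, giving $|w^{(n)}|<F_{6n}=\Theta(\tau^{6n})$, and the conjectured palindromic length is $2n+1$. The ratio $\tfrac{2n+1}{6n\ln\tau}\to\tfrac{1}{3\ln\tau}$ is the same constant you obtain, but your identification of the sequence is incorrect, so the ``easy half'' of your plan would already need to be redone before the hard half could even be attempted.
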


We propose (see Remark~\ref{rem:frid_lower_bound_conjecture} for more details) the following extension of this 
so far unproved statement.

\begin{conj}
Let $\boldsymbol{u}$ be a Sturmian word whose slope has bounded coefficients in its continued fraction.
Then 
\[
\limsup_{n\to\infty}\frac{\PL{u}(n)}{\ln n} > 0.
\]
\end{conj}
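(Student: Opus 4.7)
The plan is to leverage the S-adic representation of a Sturmian word coming from the continued fraction expansion of its slope. Write $\alpha=[0;a_1,a_2,\ldots]$ with $a_i\leq M$ for all $i$, and let $s_n$ denote the $n$-th standard word associated with $\alpha$, satisfying $s_{n+1}=s_n^{a_{n+1}}s_{n-1}$. Boundedness of the partial quotients forces $|s_n|\leq C\rho^n$ for some $\rho=\rho(M)>1$, so producing, for each sufficiently large $k$, a factor $w_k$ of $\bu$ with $|w_k|\leq C\rho^k$ and $|w_k|_{\text{pal}}\geq\Omega(k)$ already suffices for the desired logarithmic lower bound.

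For the construction of $w_k$, I would imitate Frid's Fibonacci-prefix sequence $(w^{(n)})$ from Conjecture~\ref{conj:frid_lower_bound_conjecture}, picking $w_k$ as a carefully chosen concatenation built from $s_{k-1}$ and $s_k$ and designed to be manifestly asymmetric, so that no long palindromic factor of $\bu$ aligns with its prefix or suffix. The aim is to establish a recursive estimate
\[
|w_k|_{\text{pal}} \geq |w_{k-2}|_{\text{pal}} + 1,
\]
which, iterated, yields $|w_k|_{\text{pal}}\geq\lfloor k/2\rfloor$, and hence $\PL{u}(|w_k|)\geq c\ln|w_k|$ for all large $k$.

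The key structural input is the rigidity of palindromes in Sturmian words: for each length there are at most two of them, and their occurrences are tightly controlled by the standard-word hierarchy. Consequently, in any optimal palindromic factorization $w_k=p_1\cdots p_\ell$ the $p_i$'s must respect this hierarchy, and one hopes to show that after peeling off a bounded-length prefix covered by the first few palindromes one again obtains a factor of the same form $w_{k'}$ with $k'\leq k-2$, closing the induction.

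The main obstacle is precisely this reduction step, which is already the crux of Conjecture~\ref{conj:frid_lower_bound_conjecture} for the Fibonacci case. Its resolution seems to require a fine classification of how palindromes can straddle the boundary between the $s_{k-1}$- and $s_k$-blocks of $w_k$, together with a quantitative use of the bounded-partial-quotients hypothesis to ensure that each palindrome in the factorization contributes a uniformly bounded amount of \emph{defect}. Without such a uniform defect the palindromic length could grow arbitrarily slowly, as Theorem~\ref{thm:libovolne_pomaly_rust} shows is possible when the partial quotients are unbounded; this highlights the essential role of the boundedness hypothesis in the conjecture.
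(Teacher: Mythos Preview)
The statement you are attempting is a \emph{conjecture} in the paper; the authors do not prove it, they merely propose it as an extension of Frid's (also open) Conjecture~\ref{conj:frid_lower_bound_conjecture} on the Fibonacci word. So there is no ``paper's own proof'' to compare against.

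Your proposal is an honest sketch of a plausible strategy, but it is not a proof, and you correctly identify the gap yourself. Everything hinges on the recursive inequality
\[
|w_k|_{\text{pal}} \geq |w_{k-2}|_{\text{pal}} + 1,
\]
and you do not establish it; you only describe what one would \emph{like} to show about how palindromes in an optimal factorization must align with the standard-word block structure. This reduction step is precisely the unresolved core of Conjecture~\ref{conj:frid_lower_bound_conjecture} already in the Fibonacci case $a_i\equiv 1$, and nothing in your outline explains how bounded partial quotients would make it tractable. The observations you invoke (at most two palindromes per length, control of their occurrences by the standard hierarchy) are correct but are available in the Fibonacci case as well, where the problem is still open; they are not by themselves strong enough to force a palindrome in an optimal factorization to ``respect'' the block boundaries in the way your induction requires. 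Until that step is supplied, the argument remains a heuristic, consistent with the paper's decision to state the result as a conjecture rather than a theorem.
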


\section{Preliminaries}


An \emph{alphabet} $A$ is a finite set of \emph{letters}. A finite sequence of letters of $A$ is called 
a (finite) \emph{word}. The \emph{length} of a word $w=w_1w_2\cdots w_n$, that is, the number of its 
letters, is denoted $|w|=n$. The notation $|w|_a$ is used for the number of occurrences of the letter $a$ in $w$.
The \emph{empty word} is the unique word of length 0, denoted by $\varepsilon$. The set of  all finite words over $A$ 
(including the empty word) is denoted by $A^*$, equipped with the operation of concatenation of words $A^*$ is
a free monoid with $\varepsilon$ as its neutral element.
We consider also \emph{infinite words} $\bu = u_0u_1u_2\cdots$, the set of infinite words over $A$ is 
denoted by $A^{\N}$.

A word $w$ is called a \emph{factor} of $v\in A^*$ if there exist words $w^{(1)},w^{(2)}\in A^*$ such that 
$v = w^{(1)}ww^{(2)}$. The word $w$ is called a \emph{prefix} of $v$ if $w^{(1)}=\varepsilon$, it is called a 
\emph{suffix} of $v$ if $w^{(2)}=\varepsilon$. The notions of factor and prefix can be easily extended to 
infinite words. The set of all factors of an infinite word $\bu$, called the \emph{language} of $\bu$, 
is denoted by $\mathcal{L}(\bu)$. Let $w$ be a prefix of $v$, that is, $v=wu$ for some word $u$. 
Then we write $w^{-1}v=u$.

The \emph{slope} of a nonempty word $w\in\{0,1\}^*$ is the number $\pi(w)=\frac{|w|_1}{|w|}$.
Let $\bu=(u_n)_{n\geq 0}$ be an infinite word. Then the limit
\begin{equation}\label{eq:def_slope}
\rho = \lim_{n\to\infty}\pi(u_0\cdots u_{n-1}) = \frac{|u_0\cdots u_{n-1}|_1}{n}
\end{equation}
is the \emph{slope} of the infinite word. Obviously, the slope of $\bu$ is equal to the frequency of the 
letter 1 in $\bu$.

In this paper we are concerned with the so-called \emph{Sturmian words}~\cite{morse-hedlund-ajm-62}.
These are infinite words over 
a binary alphabet that have exactly $n+1$ factors of length $n$ for each $n\geq 0$. Sturmian words admit 
several equivalent definitions and have many interesting properties.
We will need the following two fact above all. The limit in~(\ref{eq:def_slope}) exists, and thus the
slope of a Sturmian word is well defined, and, moreover, it is an irrational number~\cite{lothaire2}. 
Two Sturmian words have the same language if and only they have the same slope~\cite{mignosi-tcs-65}.


A \emph{morphism} of the free monoid $A^*$ is a map $\varphi:A^*\rightarrow A^*$ such that 
$\varphi(vw)=\varphi(v)\varphi(w)$ for all $v,w\in A^*$.
A morphism $\varphi$ is called \emph{Sturmian} if $\varphi(\bu)$ is a Sturmian word for every
Sturmian word $\bu$. The set of all Sturmian morphisms coincides with the so-called 
\emph{monoid of Sturm}~\cite{mignosi-seebold-jtnb-5}, it is the monoid generated by the following
three morphisms
\[
E: \begin{aligned}0 &\mapsto 1 \\ 1&\mapsto 0\end{aligned}\,,\qquad
G: \begin{aligned}0 &\mapsto 0 \\ 1&\mapsto 01\end{aligned}\,,\qquad
\tilde{G}: \begin{aligned}0 &\mapsto 0 \\ 1&\mapsto 10\end{aligned}\,.
\]

\section{Images of Sturmian words}

In this section we study length and palindromic length of images of words under morphisms
$\psi_b:\{0,1\}^*\rightarrow\{0,1\}^*$, where $b\in\N$, $b\geq 1$ and
\begin{equation}\label{eq:morphism_psi}
\begin{split} \psi_b(0) & = 10^{b-1},  \\ \psi_b(1) &= 10^b. \end{split}
\end{equation}
Note that $\psi_b$ is a Sturmian morphism since $\psi_b = \tilde{G}^{b-1}\circ E\circ G$.

\begin{lem}\label{lem:len_of_image}
Let $b,c\in\N$, $b,c\geq 1$ and let $v\in\{0,1\}^*$. Then
\begin{enumerate}
\item
  $|\psi_b(v)|\geq b|v|$,
\item
  $|(\psi_c\circ\psi_b)(v)|\geq 2|v|$.
\end{enumerate}
\end{lem}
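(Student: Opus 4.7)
The plan is to prove both parts by a direct letter-counting argument, exploiting the fact that $\psi_b$ has images of fixed length: $|\psi_b(0)|=b$ and $|\psi_b(1)|=b+1$.

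For part (i), I would simply write
\[
|\psi_b(v)| = |v|_0\cdot|\psi_b(0)| + |v|_1\cdot|\psi_b(1)| = b|v|_0+(b+1)|v|_1 = b|v|+|v|_1,
\]
from which $|\psi_b(v)|\geq b|v|$ is immediate since $|v|_1\geq 0$.

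For part (ii), the naive approach of chaining part (i) twice gives only $|(\psi_c\circ\psi_b)(v)|\geq cb|v|$, which fails to yield the bound $2|v|$ in the degenerate case $b=c=1$. The key observation I would use is that both $\psi_b(0)=10^{b-1}$ and $\psi_b(1)=10^b$ contain exactly one occurrence of the letter $1$, hence
\[
|\psi_b(v)|_1 = |v|_0+|v|_1 = |v|.
\]
Applying the identity from part (i) at the outer level and then inserting this count at the inner level gives
\[
|(\psi_c\circ\psi_b)(v)| = c\,|\psi_b(v)| + |\psi_b(v)|_1 \geq cb|v|+|v| = (cb+1)|v|,
\]
and since $b,c\geq 1$ implies $cb+1\geq 2$, the claim follows.

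There is really no substantial obstacle here; the only subtle point is recognizing that invoking part (i) twice is insufficient when $b=c=1$, so one must keep track of the exact formula $|\psi_b(v)| = b|v|+|v|_1$ and use the identity $|\psi_b(v)|_1=|v|$ to recover the missing $+|v|$ term.
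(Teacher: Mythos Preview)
Your proof is correct and follows essentially the same letter-counting approach as the paper: both establish $|\psi_b(v)|=b|v|_0+(b+1)|v|_1$ for part (i), and for part (ii) both use the key fact that $|\psi_b(v)|_1=|v|$. The only cosmetic difference is in the final bounding step, where you obtain the slightly sharper $(cb+1)|v|$ while the paper discards more to reach $2|v|$ directly.
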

\begin{proof}
i) Let $x=|v|_0$ and $y=|v|_1$. Then $\psi_b(v)$ contains $x':=(b-1)x+by$ zeros and $y':=x+y$ ones.
Thus $|\psi_b(v)| = x'+y' = bx+(b+1)y \geq b(x+y) = b|v|$.

ii) The word $(\psi_c\circ\psi_b)(v)$ contains $x'':=(c-1)x'+cy'$ zeros and $y'':=x'+y'$ ones. Thus
$|(\psi_c\circ\psi_b)(v)|=x''+y''=cx'+(c+1)y'\geq x'+2y'\geq 2y'=2(x+y)=2|v|$.
\end{proof}

\begin{lem}\label{lem:palllen_of_image}
Let $b\in\N$, $b\geq 1$ and let $v\in\{0,1\}^*$, Then $|\psi_b(v)|_{\text{pal}}\leq|v|_{\text{pal}}+1$.
\end{lem}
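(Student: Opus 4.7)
My plan is to lift any minimal palindromic decomposition of $v$ to a decomposition of $\psi_b(v)$ that adds at most one extra palindrome.

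The cornerstone I would first establish is a small symmetry observation: for every palindrome $p\in\{0,1\}^*$ the suffix $R(p):=1^{-1}\psi_b(p)$ is itself a palindrome. Writing $p=p_1p_2\cdots p_n$ and setting $e_i=b-1$ if $p_i=0$ and $e_i=b$ if $p_i=1$, one reads off
\[
\psi_b(p) = 1\cdot 0^{e_1}\cdot 1\cdot 0^{e_2}\cdot 1\cdots 1\cdot 0^{e_n},\qquad R(p)=0^{e_1}\,1\,0^{e_2}\,1\cdots 1\,0^{e_n}.
\]
Since $p$ is a palindrome the exponent sequence satisfies $e_i=e_{n+1-i}$, so $R(p)$ is manifestly symmetric. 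This is the only place where the concrete block structure of $\psi_b$ really enters.

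Given a minimal palindromic decomposition $v=p_1p_2\cdots p_k$ with $k=|v|_{\text{pal}}$, I would then expand
\[
\psi_b(v) = \psi_b(p_1)\psi_b(p_2)\cdots\psi_b(p_k) = 1\cdot R(p_1)\cdot 1\cdot R(p_2)\cdot 1\cdots 1\cdot R(p_k),
\]
i.e. a concatenation of $2k$ palindromes alternating between the single-letter palindrome $1$ and the palindromes $R(p_i)$ supplied by the previous step.

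To drop this count to at most $k+1$ I would use the trivial fact that flanking a palindrome with equal letters preserves palindromy, so each triple $1\cdot R(p_i)\cdot 1$ is again a palindrome. Grouping
\[
\bigl(1\,R(p_1)\,1\bigr)\,\bigl(R(p_2)\bigr)\,\bigl(1\,R(p_3)\,1\bigr)\,\bigl(R(p_4)\bigr)\cdots
\]
then yields exactly $k$ palindromes when $k$ is even, and $k+1$ palindromes when $k$ is odd (the leftover tail $1\cdot R(p_k)$ contributing two single pieces $1$ and $R(p_k)$). In either case $|\psi_b(v)|_{\text{pal}}\le k+1=|v|_{\text{pal}}+1$, which is the claim. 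I do not foresee a substantive obstacle: the whole argument rests on the symmetry identity $R(p)^R=R(p)$ for palindromic $p$, and the remaining step is a purely cosmetic parity split in the grouping.
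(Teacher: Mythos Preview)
Your proof is correct and essentially identical to the paper's. The paper observes that for a palindrome $p$ both $\psi_b(p)1$ and $1^{-1}\psi_b(p)$ are palindromes and groups $\psi_b(v)$ as $\bigl(\psi_b(p_1)1\bigr)\bigl(1^{-1}\psi_b(p_2)\bigr)\cdots$; since $\psi_b(p)1=1R(p)1$ and $1^{-1}\psi_b(p)=R(p)$, this is exactly your grouping, with the same even/odd parity split at the end.
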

\begin{proof}
One can easily check that if $p$ is a palindrome then both $\psi_b(p)1$ and $1^{-1}\psi_b(p)$ are
palindromes.

If $v=p_1p_2\cdots p_{2q}$, where all $p_i$ are palindromes, then
\[
\psi_b(v) = 
\underbrace{\psi_b(p_1)1}_{p_1'}\cdot
\underbrace{1^{-1}\psi_b(p_2)}_{p_2'}\cdot
\underbrace{\psi_b(p_3)1}_{p_3'}\cdot
\underbrace{1^{-1}\psi_b(p_4)}_{p_4'}\cdots
\underbrace{\psi_b(p_{2q-1})1}_{p_{2q-1}'}\cdot
\underbrace{1^{-1}\psi_b(p_{2q})}_{p_{2q}'}
\]
is a factorization of $\psi_b(v)$ into $2q$ palindromes and therefore we have 
$|\psi_b(v)|_{\text{pal}}\leq|v|_{\text{pal}}$.

On the other hand, if $|v|_{\text{pal}}$ is odd the factorization of $\psi_b(v)$ is almost the same with the only
exception that at the end there is (possibly non-palindromic) image of the last palindrome, i.e., $\psi_b(p_{2q+1})$.
The statement follows from the fact that $\psi_b(p_{2q+1}) = 1\cdot1^{-1}\psi_b(v_{2q+1})$.
\end{proof}

\begin{lem}\label{lem:cfe_of_image}
Let \bu\ be a Sturmian word with slope $\alpha\in(0,1)$ and let $\alpha=[0,a_1,a_2,a_3,\ldots]$ be
its continued fraction. Then $\psi_b(\bu)$ is a Sturmian word with slope $\beta$,
where $\beta=[0,b,a_1,a_2,a_3,\ldots]$.
\end{lem}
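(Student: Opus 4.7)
The plan has two independent parts: first show that $\psi_b(\bu)$ is Sturmian, then compute its slope and match it with the claimed continued fraction expansion.

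First I would dispatch the Sturmian property directly from what is already in the paper. Since $\psi_b = \tilde G^{b-1}\circ E\circ G$ lies in the monoid of Sturm, it is a Sturmian morphism, hence $\psi_b(\bu)$ is a Sturmian word whenever $\bu$ is. Its slope $\beta$ is therefore well defined and irrational, and the only thing left is to identify this $\beta$.

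Next I would compute $\beta$ from the definition \eqref{eq:def_slope}, namely as the frequency of the letter $1$ in $\psi_b(\bu)$. Set $x_n=|u_0\cdots u_{n-1}|_0$ and $y_n=|u_0\cdots u_{n-1}|_1$, so $x_n+y_n=n$ and $y_n/n\to\alpha$. Reading off the definition \eqref{eq:morphism_psi}, the prefix $\psi_b(u_0\cdots u_{n-1})$ of $\psi_b(\bu)$ contains exactly $x_n+y_n=n$ ones and has length $bx_n+(b+1)y_n=bn+y_n$. Dividing and taking the limit,
\[
\beta \;=\; \lim_{n\to\infty}\frac{n}{bn+y_n}\;=\;\frac{1}{b+\alpha}.
\]

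Finally I would verify the continued-fraction identity $\frac{1}{b+\alpha}=[0,b,a_1,a_2,\ldots]$. Since $\alpha=[0,a_1,a_2,\ldots]$ satisfies $0<\alpha<1$ and has the representation $\alpha=1/(a_1+1/(a_2+\cdots))$, the number $b+\alpha$ is positive with integer part $b$ (because $b\geq1$ and $\alpha\in(0,1)$), and its fractional part is exactly $\alpha$. Hence $\frac{1}{b+\alpha}=[0,b,a_1,a_2,\ldots]$, which is what was to be shown.

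The only mildly delicate point is the last step: one has to notice that because $\alpha\in(0,1)$, the expression $b+\alpha$ is already in the form demanded by the standard continued-fraction algorithm, so no renormalisation is needed. The rest is bookkeeping, and no hard step is anticipated.
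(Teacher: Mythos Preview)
Your proposal is correct and follows essentially the same route as the paper: compute the frequency of $1$'s in $\psi_b(\bu)$ via the letter counts $|\psi_b(v)|_1=|v|_0+|v|_1$ and $|\psi_b(v)|=b|v|_0+(b+1)|v|_1$, and conclude $\beta=1/(b+\alpha)$. You add two small clarifications the paper leaves implicit---the Sturmian property of $\psi_b(\bu)$ via $\psi_b=\tilde G^{b-1}\circ E\circ G$, and the verification that $1/(b+\alpha)=[0,b,a_1,a_2,\ldots]$ from $\alpha\in(0,1)$---but the core computation is identical.
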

\begin{proof}
Recall that $\alpha$ is the frequency of the letter 1 in \bu, that is,
\[
\alpha = \lim_{|v|\to\infty}\frac{|v|_1}{|v|_0+|v|_1}, \quad \text{where $v\in\mathcal{L}(\bu)$}.
\]
Let us consider the image of $v\in\mathcal{L}(\bu)$ under $\psi_b$. We have
$|\psi_b(v)|_0 = (b-1)|v|_0 + b|v|_1$ and $|\psi_b(v)|_1 = |v|_0 + |v|_1$.
Therefore
\begin{align*}
\beta & = \lim_{|v|\to\infty}\frac{|\psi_b(v)|_1}{|\psi_b(v)|_0+|\psi_b(v)|_1} =
\lim_{|v|\to\infty}\frac{|v|_0 + |v|_1}{b|v|_0 + (b+1)|v|_1} = \\[2mm]
&= \lim_{|v|\to\infty}\frac{1}{b + \frac{|v|_1}{|v|_0+|v|_1}} = \frac{1}{b+\alpha}.
\qedhere
\end{align*}
\end{proof}

\begin{lem}\label{lem:preimage_and_its_pallen}
Let $v\in\{0,1\}^*$ be a factor of a Sturmian word $\bu$ with slope $\beta=[0,b,a_1,a_2,a_3,\ldots]$
and let $|v|_1\geq 2$. Then there are words $v',v_L,v_R$ such that $v'\neq\varepsilon$ is a factor 
of a Sturmian words with slope $\alpha=[0,a_1,a_2,a_3,\ldots]$, $v_L$ is a proper suffix of $\psi_k(x)$ and
$v_R$ is a proper prefix of $\psi_k(y)$ for some $x,y\in\{0,1\}$, and
\begin{enumerate}
\item
  $v = v_L\psi_b(v')v_R$,
\item
  $|v|_{\text{pal}} \leq 4 + |v'|_{\text{pal}}$.
\end{enumerate}
\end{lem}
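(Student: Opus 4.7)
The plan is to use the special form of $\psi_b$: both $\psi_b(0) = 10^{b-1}$ and $\psi_b(1) = 10^b$ begin with the unique occurrence of $1$ in each. Consequently, in any concatenation of such images the letter $1$ appears exactly at the boundaries of the successive blocks, and any factor of such a concatenation admits a canonical \emph{desubstitution} in which one reads off the sequence of complete letter-images it contains together with a proper suffix of the preceding image and a proper prefix of the following image.

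To carry this out I first invoke Lemma~\ref{lem:cfe_of_image}: for any Sturmian word $\bu_\alpha$ with slope $\alpha = [0, a_1, a_2, \ldots]$ the image $\psi_b(\bu_\alpha)$ is Sturmian of slope $\beta$, so it has the same language as $\bu$, and thus $v$ occurs as a factor of $\psi_b(\bu_\alpha) = \psi_b(u_0)\psi_b(u_1)\psi_b(u_2)\cdots$. Fixing such an occurrence, let $\psi_b(u_{j_1}), \ldots, \psi_b(u_{j_2})$ be the letter-images entirely contained in that occurrence, set $v' = u_{j_1}u_{j_1+1}\cdots u_{j_2}$, and let $v_L$ and $v_R$ be the pieces of $v$ lying to the left of $\psi_b(u_{j_1})$ and to the right of $\psi_b(u_{j_2})$, respectively. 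Then $v = v_L\, \psi_b(v')\, v_R$, the word $v'$ is a factor of $\bu_\alpha$, $v_L$ is a proper suffix of $\psi_b(u_{j_1-1})$, and $v_R$ is a proper prefix of $\psi_b(u_{j_2+1})$. Since $v_L$ contains no $1$ and $v_R$ contains at most one $1$, the assumption $|v|_1 \geq 2$ forces $|v'| \geq 1$, so $v' \neq \varepsilon$; this proves~(i).

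For~(ii) I bound the palindromic length of each piece. The word $v_L$ lies in $\{0\}^*$ and is therefore a single palindrome (or empty). The word $v_R$ is either empty or of the form $1 \cdot 0^k$, so it splits into at most two palindromes. By Lemma~\ref{lem:palllen_of_image}, $|\psi_b(v')|_{\text{pal}} \leq |v'|_{\text{pal}} + 1$. Concatenating the corresponding factorizations yields
\[
|v|_{\text{pal}} \leq 1 + \bigl(|v'|_{\text{pal}} + 1\bigr) + 2 = |v'|_{\text{pal}} + 4,
\]
as required.

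The only delicate point is the boundary bookkeeping in~(i): the demand that $v_L$ and $v_R$ be \emph{proper} suffix and prefix means each letter-image entirely inside $v$ must be absorbed into $v'$ (and not left dangling in $v_L$ or $v_R$). Once this accounting is set up, the statement is a direct consequence of Lemma~\ref{lem:cfe_of_image} (used to transfer from $\bu$ to a convenient preimage $\bu_\alpha$) together with the special shape of the images $\psi_b(0)$ and $\psi_b(1)$.
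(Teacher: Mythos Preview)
Your proof is correct and follows essentially the same approach as the paper: invoke Lemma~\ref{lem:cfe_of_image} together with the fact that the language depends only on the slope to realize $v$ inside $\psi_b(\bu_\alpha)$, then desubstitute using that each block $\psi_b(x)$ begins with its unique~$1$, and finally bound $|v_L|_{\text{pal}}\leq 1$, $|v_R|_{\text{pal}}\leq 2$ and apply Lemma~\ref{lem:palllen_of_image}. Your write-up is in fact more explicit than the paper's, which simply asserts that the decomposition in~(i) exists ``unambiguously'' once $|v|_1\geq 2$ without spelling out the block-boundary bookkeeping.
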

\begin{proof}
i) Let $\bu$ be a Sturmian word with slope $\alpha=[0,a_1,a_2,a_3,\ldots]$. By Lemma~\ref{lem:cfe_of_image},
$\psi_k(\bu)$ has slope $\beta=[0,b,a_1,a_2,a_3,\ldots]$. Recall that the language of a Sturmian word is 
entirely determined by its slope, thus we have $v\in\mathcal{L}(\psi_b(\bu))$. Since by assumption
$v$ contains at least two ones, we can unambiguously write it in the required form.

ii) This statement then follows from inequalities 
$|v|_{\text{pal}} \leq |v_L|_{\text{pal}} + |\psi_b(v')|_{\text{pal}} + |v_R|_{\text{pal}}$, $|v_L|_{\text{pal}}\leq 1$,
$|v_R|_{\text{pal}}\leq 2$ and from Lemma~\ref{lem:palllen_of_image}.
\end{proof}

\section{Proofs of main Theorems}

Both proofs make use of the following idea. Let $\bu$ be a Sturmian word with slope 
$\alpha=[0,a_1,a_2,a_3,\ldots]$. Let $v=v^{(1)}\in\mathcal{L}(\bu)$. By successive application of 
Lemma~\ref{lem:preimage_and_its_pallen} we find words $v^{(2)}, v^{(3)}, \ldots, v^{(j+1)}$ such that for
every $i=1,2,\ldots,j$ we have
\begin{enumerate}
\item
  $v^{(i)}$ is a factor of a Sturmian word with slope $[0,a_i,a_{i+1},a_{i+2},\ldots]$,
\item
  $|v^{(i)}|\geq |\psi_{a_i}(v^{(i+1)})|\geq a_i|v^{(i+1)}|$ \ 
  (this follows from Lemmas~\ref{lem:preimage_and_its_pallen} and~\ref{lem:len_of_image}),
\item
  $|v^{(i)}|_{\text{pal}} \leq 4 + |v^{(i+1)}|_{\text{pal}}$,
\item
  $v^{(j+1)}$ does not contain two ones, in particular $|v^{(j+1)}|_{\text{pal}}\leq 2$ and
  $|v^{(j+1)}|\geq 1$.
\end{enumerate}
Altogether we have
\begin{equation}\label{eq:pallen_from_iterations}
\begin{split}
|v| = |v^{(1)}| &\geq a_1a_2\cdots a_j, \\
|v|_{\text{pal}} &\leq 4j + 2.
\end{split}
\end{equation}

\begin{proof}[Proof of Theorem~\ref{thm:libovolne_pomaly_rust}]
Let $f:\N\rightarrow\R$ be a non-decreasing function with $\lim_{n\to\infty}f(n)=+\infty$. We find 
$a_1\in\N$, $a_1\geq 2$ such that $f(a_1)\geq 1$, then $a_2\in\N$, $a_2\geq 2$ such that $f(a_1a_2)\geq 2^2$, and
so on, i.e., we proceed recurrently to find $a_k\in\N$, $a_k\geq 2$ such that
\begin{equation}\label{eq:f(a_1...a_k)}
f(a_1a_2\cdots a_k)\geq k^2\quad \text{ for all $k\in\N$, $k\geq 1$}.
\end{equation}
Using~(\ref{eq:pallen_from_iterations}),~(\ref{eq:f(a_1...a_k)}) and monotony of $f$ we can estimate
\[
\frac{|v|_{\text{pal}}}{f(|v|)} \leq \frac{4j+2}{f(a_1a_2\cdots a_j)} \leq \frac{4j+2}{j^2}.
\]
Obviously $j\to\infty$ as $|v|=n\to\infty$ and therefore
\[
\limsup_{n\to\infty} \frac{\PL{u}(n)}{f(n)} \leq \lim_{j\to\infty}\frac{4j+2}{j^2} = 0.
\qedhere
\]
\end{proof}

\begin{proof}[Proof of Theorem~\ref{thm:rust_pomalejsi_nez_ln}]
The estimate $|v|\geq a_1a_2\cdots a_j$ is weak in the case where most of the coefficients of the 
continued fraction are equal to 1. Therefore, we use the fact that $v^{(i)}$ contains factor
$(\psi_{a_i}\circ\psi_{a_{i+1}})(v^{(i+2)})$. By Lemma~\ref{lem:len_of_image} we have $|v^{(i)}|\geq 2|v^{(i+2)}|$
and thus $|v|\geq 2^{\lfloor\frac{j}{2}\rfloor}$. Using this estimate we get
\[
\frac{|v|_{\text{pal}}}{\ln |v|} \leq \frac{4j+2}{\frac{j-1}{2}\ln 2} \xrightarrow{\ j\to\infty\ }\frac{8}{\ln 2}.
\]
Statement of the theorem follows, using $K=\frac{8}{\ln2}$.
\end{proof}

\begin{rem}\label{rem:frid_lower_bound_conjecture}
In~\cite{frid-numeration-2018}, Frid defined the sequence $(w^{(n)})$ of prefixes of the Fibonacci word 
$\boldsymbol{f}$,  where $|w^{(n)}|$ has representation $(100)^{2n-1}101$ in the Ostrowski numeration system.

Using the Fibonacci sequence $(F_n)_{n\geq 0}$ (given by $F_0=1$, $F_2=2$ and $F_{n+2}=F_{n+1}+F_n$ for $n\in\N$)
one gets $|w^{(n)}|=F_0+F_2+\sum_{k=1}^{2n-1}F_{3k+2} < F_{6n}$. Frid proved that $|w^{(n)}|_{\text{pal}}\leq 2n+1$,
while she conjectured that the equality $|w^{(n)}|_{\text{pal}}= 2n+1$ holds. Since 
$F_{n} = \tfrac{1}{\sqrt{5}}\tau^{n+2}(1+o(1))$, the validity of Frid's conjecture would imply
\begin{equation}\label{eq:frid_conjecture_rephrased}
\frac{|w^{(n)}|_{\text{pal}}}{\ln|w^{(n)}|} \geq \frac{2n+1}{\ln F_{6n}} = \frac{2n+1}{(6n+2)\ln\tau(1+o(1))}
\xrightarrow{\;n\to\infty\;}\frac{1}{3\ln\tau}
\end{equation}
as stated in Conjecture~\ref{conj:frid_lower_bound_conjecture}.

In her proof of the fact that for a Sturmian word $\bu$ the function $\PL{u}(n)$ is not bounded,
Frid considered only prefixes of $\bu$. This was made possible by the following result by 
Saarela~\cite{saarela-words-2017}: for a factor $x$ of a word $y$ we have $|x|_{\text{pal}}\leq 2|y|_{\text{pal}}$.
Computer experiments do indicate that the prefixes $w^{(n)}$ have the highest possible ratio
$\frac{|w|_{\text{pal}}}{\ln|w|}$ (among all prefixes of $\boldsymbol{f}$). However, it is still possible that 
there is a sequence of factors 
(not prefixes) of $\boldsymbol{f}$ which can be used to enlarge the constant $\frac{1}{3\ln\tau}$ 
in~(\ref{eq:frid_conjecture_rephrased}).
\end{rem}

\section*{Acknowledgements}

This work was supported by the project CZ.02.1.01/0.0/0.0/16\_019/0000778
from European Regional Development Fund. We also acknowledge financial support of the Grant Agency of the 
Czech Technical University in Prague, grant No.\ SGS14/205/OHK4/3T/14.

\end{document}